\theoremstyle{plain}
\newtheorem{theorem}{Theorem}[section]
\newtheorem{lemma}[theorem]{Lemma}
\theoremstyle{definition}
\theoremstyle{remark}
\newtheorem*{remark}{Remark}
\def\R{\mathbb{R}}
\let\Phi\phi
\def\toOver#1{\xrightarrow{#1}}
\def\toPhi{\toOver{\Phi}}
\DeclareMathOperator{\LCP}{LCP}
\DeclareMathOperator{\PLCP}{P-LCP}
\DeclareMathOperator{\sgn}{sgn}
\def\subcube#1#2{{#1}[{#2}]}
\begin{document}

\date{July 3, 2013}

\title{Counting Unique-Sink Orientations}

\author{%
Jan Foniok%
	\thanks{Institute for Operations Research, ETH Zurich, 8092 Zurich, Switzerland}
	\thanks{Current address: Department of Mathematics and Statistics, Queen's University,
	Jeffery Hall, 48~University Avenue, Kingston ON K7L 3N6, Canada;
	\texttt{foniok@mast.queensu.ca}}
\and
Bernd G{\"a}rtner%
	\thanks{Institute of Theoretical Computer Science, ETH Zurich, 8092 Zurich,
	Switzerland}
	\thanks{\texttt{gaertner@inf.ethz.ch}}
\and
Lorenz Klaus%
	\footnotemark[1]
	\thanks{\texttt{lklaus@ifor.math.ethz.ch}}
\and
Markus Sprecher%
	\footnotemark[3]
	\thanks{\texttt{markussp@student.ethz.ch}}
}

\maketitle

\begin{abstract}
  Unique-sink orientations (USOs) are an abstract class of orientations
  of the $n$-cube graph. We consider some classes of USOs that are
  of interest in connection with the linear complementarity problem.
  We summarize old and show new lower and upper bounds on the sizes
  of some such classes. Furthermore, we provide a characterization
  of K-matrices in terms of their corresponding USOs.

  \bigskip

  \noindent\textbf{Keywords:} unique-sink orientation, linear
  complementarity problem, pivoting, P-matrix, K-matrix

  \noindent\textbf{MSC2010:} 90C33, 52B12, 05A16

\end{abstract}


\section{Introduction}

Unique-sink orientations (USOs) are an abstract class of orientations
of the $n$-cube graph. A number of concrete geometric optimization
problems can be shown to have the combinatorial structure of a USO.
Examples are the linear programming problem~\cite{GarSch:Linear}, and
the problem of finding the smallest enclosing ball of a set of
points~\cite{GarSch:Linear,SzaWel:UniSink}, or a set of
balls~\cite{FisGar:The-smallest}. In this paper, we count the USOs of
the $n$-cube that are generated by P-matrix linear complementarity
problems (P-USOs). This class covers many of the ``geometric'' USOs.
We show that the number of P-USOs is $2^{\Theta(n^3)}$.  The lower
bound construction is the interesting contribution here, and it even
yields USOs from the subclass of K-USOs, whose combinatorial structure
is known to be very rigid~\cite{FonFukGar:Pivoting}. In contrast, the
number of all $n$-cube USOs is doubly exponential
in~$n$~\cite{Mat:The-Number}.

\subsection*{Unique-sink orientations}

We follow the notation of~\cite{FonFukGar:Pivoting}.  Let
$[n]:=\{1,2,\ldots,n\}$.  For a bit vector $v\in\{0,1\}^n$ and
$I\subseteq [n]$, let $v\oplus I$ be the element of~$\{0,1\}^n$
defined by
\[(v\oplus I)_j :=
	\begin{cases}
	1-v_j&\text{if $j\in I$,}\\
	v_j&\text{if $j\notin I$.}
	\end{cases}
\]
Instead of $v\oplus\{i\}$ we write $v\oplus i$.

Under this notation, the (undirected) \emph{$n$-cube} is the graph
$G=(V,E)$ with
\[
V := \{0,1\}^n, \quad
E := \{\{v, v\oplus i\} : v\in V,\ i\in[n]\}.
\] 

A \emph{subcube} of~$G$ is a subgraph $G'=(V',E')$ of~$G$ where
$V'=\{v\oplus I:I\subseteq C\}$ for some vertex $v$ and some set
$C\subseteq[n]$, and $E'=E\cap\binom{V'}{2}$.
The \emph{dimension} of such a subcube is~$|C|$.

Let $\Phi$ be an orientation of the $n$-cube (a digraph with underlying
undirected graph $G$). If $\Phi$~contains the directed edge $(v,
v\oplus i)$, we write $v\toPhi v\oplus i$, or simply $v\to v\oplus i$ if
$\Phi$~is clear from the context. If $V'$ is the vertex set of a
subcube, then the directed subgraph of~$\phi$ induced by~$V'$ is denoted
by~$\subcube{\phi}{V'}$. For $F\subseteq[n]$, let $\Phi^{(F)}$
be the orientation of the $n$-cube obtained by reversing all edges in
coordinates contained in~$F$; formally
\[v \toOver{\Phi^{(F)}} v\oplus i \quad :\Leftrightarrow \quad
	\begin{cases}
	v \toPhi v\oplus i &\text{if $i\notin F$},\\
	v\oplus i \toPhi v &\text{if $i\in F$}.
	\end{cases}
\]

An orientation~$\phi$ of the $n$-cube is a \emph{unique-sink
  orientation} (\emph{USO}) if every subcube $G'=(V',E')$ has a unique
sink (that is, vertex of outdegree zero) in~$\subcube{\phi}{V'}$. It
is not difficult to show that in a unique-sink orientation, every
subcube also has a unique source (that is, vertex of indegree zero).
More generally, if $\phi$~is a unique-sink orientation and $F\subseteq[n]$,
then $\phi^{(F)}$ is a unique-sink orientation as well~\cite[Lemma 2.1]{SzaWel:UniSink}.

A special USO is the \emph{uniform orientation}, in which $v\to v\oplus i$
if and only if $v_i=0$.

Unique-sink orientations enable a graph-theoretic description
of simple principal pivoting algorithms for linear
complementarity problems. They were introduced by Stickney and
Watson~\cite{StiWat:Digraph-models} and have recently received much
attention~\cite{GarMorRus:Unique,GarSch:Linear,GarSpr:Hessenberg,Mat:The-Number,Mor:Randomized-pivot,SchSza:Finding,Sch:Unique,SzaWel:UniSink}.

\subsection*{Linear complementarity problems}

A \emph{linear complementarity problem} ($\LCP(M,q)$) is for a given matrix
$M\in\R^{n\times n}$ and a vector $q\in\R^n$, to find vectors $w,z\in\R^n$
such that
\begin{equation}\label{eq:lcpdef}
w-Mz = q, \quad w,z\geq 0, \quad w^Tz = 0.
\end{equation}

A \emph{P-matrix} is a square real matrix whose principal minors are all
positive. If $M$~is a P-matrix, the appertaining LCP is called a
\emph{P-LCP}; in this case
there exists a unique solution for any~$q$~\cite{STW}.

Let $B\subseteq[n]$, and let $A_B$ be the $n\times n$ matrix whose $j$th
column is the $j$th column of~$-M$ if $j\in B$, and the $j$th column of
the $n\times n$ identity matrix~$I_n$ if $j\notin B$.  If $M$ is a P-matrix,
then $A_B$ is invertible for every set $B$. We call $B$ a \emph{basis}.
If $A_B^{-1}q\geq 0$, let
\begin{equation}\label{eq:lcpsol}
w_i := \begin{cases}
0 & \text{if $i\in B$} \\
(A_B^{-1}q)_i & \text{if $i\notin B$}
\end{cases}, \qquad 
 z_i := \begin{cases}
(A_B^{-1}q)_i & \text{if $i\in B$}\\
0 & \text{if $i\notin B$}
\end{cases}.
\end{equation}
The vectors $w$, $z$ are then a solution to the LCP~\eqref{eq:lcpdef}.

A problem $\PLCP(M,q)$ is \emph{nondegenerate} if $(A_B^{-1}q)_i\neq 0$
for all $B$ and $i$.
Following~\cite{StiWat:Digraph-models}, a nondegenerate $\PLCP(M,q)$
induces a USO:
For $v\in\{0,1\}^n$, let $B(v):=\{j\in[n] : v_j=1\}$.
Then the unique-sink orientation $\Phi$ induced
by $\PLCP(M,q)$ is given by
  \begin{equation}
    \label{eq:usodef}
  v \toPhi v \oplus i  \quad :\Leftrightarrow \quad
  (A_{B(v)}^{-1}q)_i < 0.
  \end{equation}
The run of a simple principal pivoting method
(see~\cite[Chapter~4]{Mur:Linear}) for the P-LCP then corresponds to
following a directed path in the orientation~$\phi$. Finding the sink of the 
orientation is equivalent to finding a basis~$B$ with $A_B^{-1}q\geq 0$, and 
thus via~\eqref{eq:lcpsol} to finding the solution to the P-LCP.

In this paper, we are primarily interested in establishing bounds for the
number of $n$-dimensional USOs satisfying some additional properties (for
instance, USOs induced by P-LCPs), which we introduce in the next section.

\section{Matrix classes and USO classes}
\label{sec:classes}
It is NP-complete to decide whether a solution to an
LCP exists~\cite{Chung:Hardness}. If the matrix~$M$ is a
P-matrix, however, a solution always exists. The problem of
finding it is unlikely to be NP-hard, because if it were, then
NP${}={}$co-NP~\cite{Meg:A-Note-on-the-Complexity}.  Even so, no
polynomial-time algorithms for solving P-LCPs are known. Hence our
motivation to study some special matrix classes and investigate what
combinatorial properties their USOs have. The ultimate goal is then
to try and exploit these combinatorial properties in order to find an
efficient algorithm for the corresponding LCPs.

A \emph{Z-matrix} is a square matrix whose off-diagonal entries
are all non-positive. A \emph{K-matrix} is a matrix which is both a Z-matrix
and a P-matrix. A \emph{hidden-K-matrix} is a P-matrix~$M$ such that there
exist Z-matrices $X$ and~$Y$ and non-negative vectors $r$ and~$s$ with
$MX=Y$, $r^TX+s^TY>0$. Taking $X$ to be the identity matrix and $Y=M$, $s=0$
and $r$ any positive vector
shows that every K-matrix is a hidden-K-matrix as well.

The importance of these matrix classes is due to the fact that
polynomial-time algorithms are known for solving the $\LCP(M,q)$
if the matrix~$M$ is a Z-matrix~\cite{Cha:A-special,Sai:A-note}, a
hidden-K-matrix~\cite{Man:Linear}, or the transpose of a
hidden-K-matrix~\cite{PanCha:Linear}.

A USO is a \emph{P-USO} if it is induced via~\eqref{eq:usodef} by some
$\LCP(M,q)$ with a P-matrix~$M$; it is a \emph{K-USO} if it is induced
by some
$\LCP(M,q)$ with a K-matrix~$M$; and it is a \emph{hidden-K-USO} if it
is induced by some $\LCP(M,q)$ with a hidden-K-matrix~$M$.

A USO is a \emph{Holt--Klee USO} if in each of its subcubes, there are
$d$~directed paths from the source to the sink of the subcube, with no
two paths sharing a vertex other than source and sink; here $d$~is the
dimension of the subcube. A USO~$\phi$ is \emph{strongly Holt--Klee}
if $\phi^{(F)}$ is Holt--Klee for every $F\subseteq[n]$.
By~\cite{GarMorRus:Unique}, every P-USO is a strongly Holt--Klee USO.

Finally, a USO is \emph{locally uniform}, if
\begin{multline}
\label{eq:up-uni}
\text{whenever } v_i=v_j=0 \text{ and } v\toPhi v\oplus i,\ v\toPhi v\oplus j,\\
\text{then } v\oplus i \toPhi v\oplus\{i,j\},\ v\oplus j\toPhi v\oplus\{i,j\}
\end{multline}
and
\begin{multline}
\label{eq:down-uni}
\text{whenever } v_i=v_j=0 \text{ and } v\oplus i\toPhi v,\ v\oplus j \toPhi v,\\
\text{then } v\oplus\{i,j\}\toPhi v\oplus i,\ v\oplus\{i,j\}\toPhi v\oplus j.
\end{multline}
By~\cite{FonFukGar:Pivoting}, every K-USO is locally uniform, and every locally
uniform USO is acyclic. We thus have the following chain of
inclusions, some of which are in fact strict:
\begin{multline*}
\text{K-USOs}\subseteq\text{locally uniform P-USOs}
\subset\text{acyclic P-USOs}\subset\text{P-USOs}\\
\subset \text{strongly Holt--Klee USOs}\subset\text{Holt--Klee USOs}.
\end{multline*}
The first inclusion is not known to be strict; see also
Section~\ref{sec:loc}. The second inclusion is easily seen to be strict
already for $n=2$. Strictness of the third inclusion is due to 
Stickney and Watson: there exists a cyclic P-USO of the 
3-cube~\cite{StiWat:Digraph-models}. The fourth inclusion is strict
as a consequence of our counting results: the number of strongly
Holt--Klee USOs is much larger than the number of P-USOs. Finally,
there is an example that shows strictness of the fifth inclusion~\cite[Fig.~12]{GarMorRus:Unique}.

An \emph{LP-USO} is an orientation of the $n$-cube admitting a realization
$r:\{0,1\}^n\to\R^n$
as a polytope in the $n$-dimensional Euclidean space, combinatorially
equivalent to the $n$-cube, such that there
exists a linear function~$f:\R^n\to\R$ and
\[v\toPhi v\oplus i \quad\text{if and only if}\quad f(r(v\oplus i)) > f(r(v)).\]
It follows
from~\cite{Kla:On-Classes,Kla:Fresh,Mor:Distinguishing,MorLaw:Geometric,PanCha:Linear}
that LP-USOs are exactly hidden-K-USOs, and we have:
\[\text{K-USOs}\subset\text{LP-USOs}=\text{hidden-K-USOs}\subseteq
\text{acyclic P-USOs}.\]
Again, the first inclusion is strict already for $n=2$; it is open
whether the last inclusion is strict. 
In the next section we examine the numbers of $n$-USOs in the respective classes.

It is also possible to obtain USOs from completely general linear
programs. The reduction in~\cite{GarSch:Linear} yields \emph{PD-USOs},
i.e., USOs generated by LCPs with symmetric positive definite matrices
$M$. Since these are exactly the symmetric P-matrices~\cite[Section
3.3]{CotPanSto:LCP}, we also have
$\text{PD-USOs}\subseteq\text{P-USOs}$, where we do not know whether
the inclusion is strict.
The USOs that are obtained from the problem of finding the smallest
enclosing ball of a set of points~\cite[Section 3.2]{Gar:RandAlgs} are
``almost'' PD-USOs in the sense that every subcube not containing the
origin $0$ is oriented by a PD-USO~\cite{hiro}. For the USOs from
smallest enclosing balls of \emph{balls}~\cite{FisGar:The-smallest}, 
we are not aware of a similar result. 

\section{Counting USOs}

In this section, we examine the number of USOs in the classes
described above, depending on their dimension. The $n$-cube as we have
introduced it is a labelled graph; accordingly, the counting will be
in the labelled sense. But all the bounds are valid also for the
number of isomorphism classes of USOs: the $n$-cube has $2^n
n!=2^{\Theta(n\log n)}$ automorphisms, so the labelled and unlabelled
counts differ by at most this factor---which is negligible, since all
our bounds are at least of the order $2^{\Omega(n^3)}$.

First counting results about USOs were obtained by Matou\v
sek~\cite{Mat:The-Number}, who gave asymptotic bounds on the number of
all USOs and acyclic USOs.

Next, Develin~\cite{Dev:LP-orientations}---in order to show that
the Holt--Klee condition does not characterize LP-USOs---proved
that the number of $n$-dimensional LP-USOs is bounded from above by
$2^{O(n^3)}$, whereas the number of Holt--Klee USOs is bounded
from below by~$2^{\Omega(2^n/\sqrt{n})}$.

Using similar means, we prove an upper bound of~$2^{O(n^3)}$
on the number of P-USOs, and observe that a slight
modification of Develin's construction yields a lower bound
of~$2^{\binom{n-1}{\lfloor(n-1)/2\rfloor}}=2^{\Omega(2^n/\sqrt{n})}$ for strongly Holt--Klee
locally uniform USOs. Furthermore, we provide a construction of
$2^{\Omega(n^3)}$ K-USOs.
These results imply that the number of K-USOs, LP-USOs, as well
as P-USOs, is $2^{\Theta(n^3)}$.

Previously known and new bounds on the number of $n$-dimensional
USOs in the classes defined in the previous section are summarized in
the following table. Where an entry is missing, the best known bound
coincides with the one of a subclass or a superclass; see also
Section~\ref{sec:classes}. We note that already before Develin's counting 
result~\cite{Dev:LP-orientations}, it had been shown by Morris~\cite{Mor:Distinguishing} that the Holt--Klee
condition does not characterize P-USOs, starting from dimension $n=4$.

\newdimen\wwiddthb
\newdimen\wwiddthc
\setbox0\hbox{$2^{\Omega(2^n/\sqrt{n})}$~\cite{Dev:LP-orientations}}
\wwiddthb=\wd0
\setbox0\hbox{$2^{O(n^3)}$~\cite{Dev:LP-orientations}}
\wwiddthc=\wd0
\def\wdb#1{\hbox to \wwiddthb{#1\hfil}}
\def\wdc#1{\hbox to \wwiddthc{#1\hfil}}

\begin{center}
\begin{tabular}{l|cc}
class&lower bound&upper bound\\
\hline
K-USOs&					\wdb{$2^{\Omega(n^3)}$}\\
LP-USOs&			&					\wdc{$2^{O(n^3)}$~\cite{Dev:LP-orientations}}\\
P-USOs&				&					\wdc{$2^{O(n^3)}$}\\
acyclic strongly Holt--Klee USOs&	\wdb{$2^{\Omega(2^n/\sqrt{n})}$}\\
Holt--Klee USOs&			\wdb{$2^{\Omega(2^n/\sqrt{n})}$~\cite{Dev:LP-orientations}}\\
locally uniform USOs&			\wdb{$2^{\Omega(2^n/\sqrt{n})}$}\\
acyclic USOs~\cite{Mat:The-Number}&	\wdb{$2^{2^{n-1}}$}&		\wdc{$(n+1)^{2^n}$}\\
all USOs~\cite{Mat:The-Number}&		\wdb{$n^{\Omega(2^n)}$}&	\wdc{$n^{O(2^n)}$}
\end{tabular}
\end{center}

\subsection{An upper bound for P-USOs}\label{sec:upperboundP}

Every P-USO is determined by the sequence $\sigma(M,q)=\bigl(\sgn
(A_{B(v)}^{-1}q)_i : v\in \{0,1\}^n,\ i\in[n]\bigr)$, which is a function
of the P-matrix~$M$ and the right-hand side~$q$. Furthermore, we are
interested only in nondegenerate right-hand sides~$q$, which means we are
interested only in sequences containing no~$0$.

\begin{lemma}
\label{lem:uso-poly}
Each entry of the vector $\sigma(M,q)$ is the sign of a polynomial in
the entries of~$M$ and~$q$ of degree at most~$n$.
\end{lemma}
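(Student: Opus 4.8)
The plan is to express $(A_{B(v)}^{-1}q)_i$ via Cramer's rule and argue that both the numerator and the denominator are polynomials of controlled degree in the entries of $M$ and $q$; since the sign of a ratio is governed by the signs of numerator and denominator, and the denominator $\det A_{B(v)}$ has a fixed sign (positivity of principal minors for a P-matrix), the sign of the entry reduces to the sign of a single polynomial.

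\medskip
First I would recall the structure of $A_{B}$ for $B=B(v)$: its columns are either columns of $-M$ (for indices in $B$) or columns of the identity $I_n$ (for indices not in $B$). By Cramer's rule,
\[
(A_{B}^{-1}q)_i = \frac{\det A_{B}^{(i)}}{\det A_{B}},
\]
where $A_{B}^{(i)}$ is obtained from $A_{B}$ by replacing its $i$th column with $q$. The entries of both $A_{B}$ and $A_{B}^{(i)}$ are entries of $M$, entries of $q$, or constants $0,1$. Hence each determinant is a polynomial in the entries of $M$ and $q$, and expanding an $n\times n$ determinant shows the degree in these variables is at most $n$. Next I would observe that the denominator $\det A_{B}$ equals, up to sign, a principal minor of $M$: the identity columns can be expanded out, leaving the determinant of the principal submatrix of $-M$ indexed by $B$, so $\det A_{B}=\pm\det M_{BB}$, which for a P-matrix is nonzero of a sign determined only by $|B|$. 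This means the sign of the fraction equals the sign of $\det A_{B}^{(i)}$ times a known constant sign, so $\sgn (A_{B}^{-1}q)_i=\sgn\bigl(\pm\det A_{B}^{(i)}\bigr)$, a single polynomial of degree at most $n$.

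\medskip
The mildly delicate point is the degree bound for the numerator. The replaced column $q$ contributes degree at most one in the entries of $q$, while the remaining columns contribute entries of $-M$ or of $I_n$; in the worst case a term in the Leibniz expansion picks up one factor from each of the $n$ columns, so the total degree in the entries of $M$ and $q$ is at most $n$. One must be slightly careful that columns coming from $I_n$ contribute only constants and hence do not raise the degree, which only helps. I would therefore phrase the bound as \emph{at most} $n$, which is all that is claimed.

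\medskip
I do not expect a serious obstacle here; the main thing to get right is bookkeeping of the degree and making explicit that the denominator's sign is fixed (so it may be absorbed into the polynomial whose sign we take). A clean way to avoid even mentioning Cramer's rule for the denominator separately is to note that $(A_{B}^{-1}q)_i<0 \iff \det A_{B}^{(i)}/\det A_{B}<0 \iff \det A_{B}^{(i)}\cdot \det A_{B}<0$, and then absorb the known sign of $\det A_{B}$; but since the sign of $\det A_{B}$ is already pinned down by P-matrix positivity, the simpler formulation above suffices, and each entry of $\sigma(M,q)$ is indeed the sign of one polynomial of degree at most $n$.
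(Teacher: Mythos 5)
Your proposal is correct and follows essentially the same route as the paper: the paper writes $(A_{B(v)}^{-1}q)_i$ via the adjugate (cofactor) formula, which is exactly the cofactor expansion of your Cramer's-rule numerator $\det A_B^{(i)}$ along the $i$th column, and both arguments then use the P-matrix property to pin the sign of $\det A_B$ to $(-1)^{|B|}$ and absorb it into a single polynomial of degree at most $n$. No gaps; the degree bookkeeping and the fixed-sign-denominator step match the paper's proof.
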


\begin{proof}
The entries of the matrix $A_{B(v)}^{-1}$ can be computed as
\[
(A_{B(v)}^{-1})_{rs} = \frac{1}{\det A_{B(v)}} (-1)^{r+s} A_{sr},
\]
where $A_{ij}$ is the determinant of the submatrix of~$A_{B(v)}$ obtained
by deleting the $i$th row and the $j$th column, which is a polynomial
of degree at most~$n-1$. Hence
\[
(A_{B(v)}^{-1}q)_i = 
	\frac{1}{\det A_{B(v)}} \sum_{s=1}^n q_s \cdot (-1)^{i+s} \cdot A_{si}.
\]

Recall that $A_{B(v)}$ has $|B(v)|$ columns of~$-M$ and $n-|B(v)|$
columns of the identity matrix; thus $\sgn\det A_{B(v)} = (-1)^{|B(v)|}$,
since $M$~is a P-matrix. Therefore
\[
\sgn (A_{B(v)}^{-1}q)_i =
	\sgn\Bigl( (-1)^{|B(v)|} \cdot
	\sum_{s=1}^n q_s \cdot (-1)^{i+s} \cdot A_{si} \Bigr),
\]
which is the sign of a polynomial of degree at most~$n$.
\end{proof}

The algebraic tool we will apply is the following theorem.

\begin{theorem}[Warren \cite{Warren}]
\label{thm:warren}
Let $p_1,\dotsc,p_\ell$ be real polynomials in $k$~variables, each of
degree at most~$d$. For $\ell\ge k$, the number of sign sequences
$\sigma(x)=(\sgn p_1(x),\dotsc, \sgn p_\ell(x))$ that consist of terms
$+1$,~$-1$ is at most~$(4ed\ell/k)^k$.
\end{theorem}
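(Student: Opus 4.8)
The plan is to reduce the combinatorial count to a topological one and then invoke a Milnor--Thom-type bound on the number of connected components of a real semialgebraic set. First I would observe that a sign sequence with all entries in $\{+1,-1\}$ is realised exactly at points of the open set $U := \{x \in \R^k : p_i(x) \ne 0 \text{ for all } i\}$, and that on each connected component of $U$ every $\sgn p_i$ is constant (to flip a sign one must cross the zero set $\{p_i = 0\}$ and hence leave $U$). Thus distinct strict sign sequences are carried by disjoint unions of connected components of $U$, and the quantity to be bounded is at most $b_0(U)$, the number of connected components of $U$. Writing $Q := \prod_{i=1}^s p_i$, a polynomial of degree at most $ds$, we have $U = \{x : Q(x) \ne 0\}$, so it suffices to bound the number of components of the complement of a single real hypersurface of degree at most $ds$.

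For the bound on $b_0(U)$ I would use the critical-point (Morse-theoretic) method. On each \emph{bounded} component of $\{Q > 0\}$ the function $Q$ attains a positive maximum at an interior point, which is a solution of $\nabla Q = 0$; the same holds for minima on bounded components of $\{Q < 0\}$. By B\'ezout, the system $\partial Q/\partial x_1 = \cdots = \partial Q/\partial x_k = 0$ has at most $(\deg Q - 1)^k \le (ds-1)^k$ isolated solutions, which already bounds the number of bounded components. Two technical points must be dealt with. The first is that critical points need not be isolated or nondegenerate; this is handled by replacing $Q$ with a generic small perturbation (Sard's theorem), which makes all critical points nondegenerate without decreasing the number of components. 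The second, and the one I expect to be the main obstacle, is the presence of \emph{unbounded} components, on which $Q$ need attain no extremum: these are handled by a compactification, either by intersecting with a sufficiently large ball and controlling the cells created on its bounding sphere, or by adding a coercive term so that the modified function is proper and attains an extremum on every component. Carrying this out carefully is precisely the content of the Milnor--Thom estimate, and it is where all the real work lies.

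The crude version of the above yields a bound of the shape $(O(ds))^k$; to recover the stated $(4eds/k)^k$, with its characteristic $\binom{s}{k}$-type dependence that requires the hypothesis $s \ge k$, I would refine the component count by an incidence/arrangement argument rather than applying B\'ezout to the full product $Q$. The full-dimensional cells of the arrangement of the $s$ hypersurfaces $\{p_i = 0\}$ are controlled, via an Euler-characteristic sweep over dimensions, by the lower-dimensional faces and ultimately by the $0$-dimensional faces (``vertices''), where $k$ of the hypersurfaces meet. There are at most $\binom{s}{k}$ choices of such a $k$-subset, and by B\'ezout each contributes at most $d^k$ points, so the number of vertices is at most $d^k\binom{s}{k}$; summing the analogous contributions over all dimensions gives a cell bound of the form $\sum_{j=0}^{k}\binom{s}{j}d^j$. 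Finally, using $s \ge k$ together with the standard estimate $\binom{s}{k}\le (es/k)^k$ and collecting the remaining constant factors (the source of the $4$ and the $e$) produces the clean bound $(4eds/k)^k$. As before, the genericity of the arrangement needed for the B\'ezout counts is arranged by a perturbation that does not decrease the number of realised sign sequences.
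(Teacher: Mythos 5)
The paper itself offers no proof of this theorem: it is imported as a black box from Warren's 1968 paper, so your proposal can only be measured against Warren's own argument. Your first two paragraphs are sound and do follow Warren's opening moves: the reduction of strict sign vectors to connected components of $U=\{x\in\R^k: \prod_i p_i(x)\ne 0\}$ is correct (signs are locally constant on $U$), and the critical-point/B\'ezout sketch, with its acknowledged reliance on perturbation and on a compactification to handle unbounded components, is a fair summary of the Milnor--Thom estimate. That much delivers a bound of the shape $(O(ds))^k$.

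The genuine gap is in your third paragraph, the refinement that is supposed to produce the crucial factor $1/k$ in the base. The mechanism you propose---charging full-dimensional cells to vertices of the arrangement via an ``Euler-characteristic sweep'', bounding vertices by $d^k\binom{s}{k}$ via B\'ezout, and arriving at the intermediate cell bound $\sum_{j=0}^{k}\binom{s}{j}d^j$---is the \emph{hyperplane}-arrangement argument, and it fails for curved hypersurfaces. Cells of an arrangement of algebraic hypersurfaces need not have any vertices at all (take $s$ pairwise disjoint spheres: $s+1$ cells, no $0$-dimensional faces), so cells cannot be charged to vertices. Worse, your intermediate bound is simply false: for the single polynomial $Q(x)=\prod_{j=1}^{k}\prod_{l=1}^{m}(x_j-l)$ of degree $d=km$, the complement $\{Q\ne 0\}$ has $(m+1)^k=(d/k+1)^k$ components, far exceeding $\sum_{j=0}^{k}\binom{1}{j}d^j=1+d$ (already $9>5$ for $k=m=2$). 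Since your chain of inequalities is (number of sign patterns) $\le$ (number of components of $U$) $\le \sum_{j}\binom{s}{j}d^j$, it breaks at the second step. The correct refinement---and this is the real content of Warren's proof---replaces each $p_i$ by the two perturbed hypersurfaces $\{p_i=\epsilon\}$ and $\{p_i=-\epsilon\}$ for a small generic $\epsilon$, and charges each connected component of each realizable sign set to a connected component of an intersection of \emph{at most $k$} of these perturbed hypersurfaces (at most $k$ by a dimension/active-constraint argument, which is where the role of $k$ genuinely enters); Milnor--Thom then bounds the number of components of each such intersection variety by $O(d)^k$---note the exponent remains $k$, it does not drop to $j$. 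This gives $\sum_{j\le k}\binom{s}{j}2^jO(d)^k$, and only then does $\binom{s}{k}\le(es/k)^k$ together with $s\ge k$ yield a bound of the shape $(O(ds/k))^k$. Your sketch is missing this charging argument, and no count of vertices can substitute for it. (Incidentally, for the paper's application in Theorem~\ref{thm:p-upper}, with $s=n2^n$, $d=n$, $k=n^2+n$, the unrefined $(O(ds))^k$ bound from your first two paragraphs would already give $2^{O(n^3)}$; but that does not prove the theorem as stated.)
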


Now all is set to prove an upper bound on the number of P-USOs.

\begin{theorem}
\label{thm:p-upper}
The number of distinct $n$-dimensional P-USOs is at most~$2^{O(n^3)}$.
\end{theorem}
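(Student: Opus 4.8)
The plan is to apply Warren's theorem (Theorem~\ref{thm:warren}) directly to the family of polynomials supplied by Lemma~\ref{lem:uso-poly}. The guiding observation is already in place: every P-USO is determined by its sign sequence $\sigma(M,q)$, and by Lemma~\ref{lem:uso-poly} each coordinate of this sequence is the sign of a polynomial in the entries of $M$ and~$q$. Hence the map $(M,q)\mapsto\sigma(M,q)$ followed by the decoding~\eqref{eq:usodef} is onto the set of P-USOs, so the number of P-USOs is bounded above by the number of admissible sign sequences of these polynomials, which is exactly what Warren's theorem estimates.

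First I would fix the parameters to feed into the theorem. The variables are the $n^2$ entries of $M$ together with the $n$ entries of $q$, so the number of variables is $k=n^2+n$. The sequence $\sigma(M,q)$ has one coordinate for each pair $(v,i)$ with $v\in\{0,1\}^n$ and $i\in[n]$, whence the number of polynomials is $s=n\cdot 2^n$. By Lemma~\ref{lem:uso-poly} each of these polynomials has degree at most $d=n$. Since we restrict attention to nondegenerate right-hand sides, the sequences of interest contain no~$0$; this is precisely the class of sign sequences counted by Warren's theorem.

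Next I would check the hypothesis $s\ge k$, which holds since $n\cdot 2^n\ge n^2+n$ for every $n\ge 1$. Warren's theorem then bounds the number of admissible sign sequences by $(4eds/k)^k$. It remains to estimate this quantity: substituting the parameters gives the base $4eds/k = 4e\,n^2\,2^n/(n^2+n)$, which is $O(2^n)$, so its binary logarithm is $O(n)$. Raising to the power $k=n^2+n=\Theta(n^2)$ yields a binary logarithm of order $k\cdot O(n)=O(n^3)$, and therefore the number of P-USOs is at most~$2^{O(n^3)}$.

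The argument is essentially mechanical once the correct polynomials are in hand, so I do not anticipate a serious obstacle. The only point requiring a little care is the arithmetic simplification of the Warren bound: one must track that the base collapses to $O(2^n)$ (so that its logarithm contributes only $O(n)$) and that the exponent is $\Theta(n^2)$, so the product lands at $O(n^3)$ rather than, say, $O(n^2\,2^n)$. I would also record the (harmless) remark that different pairs $(M,q)$ may induce the same P-USO, so counting sign sequences over-counts the P-USOs, which only strengthens the upper bound.
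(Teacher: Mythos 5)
Your proposal is correct and follows exactly the paper's own proof: both apply Warren's theorem to the $n2^n$ polynomials of degree at most $n$ from Lemma~\ref{lem:uso-poly} in the $k=n^2+n$ variables given by the entries of $M$ and $q$, and simplify the resulting bound $(4eds/k)^k$ to $2^{O(n^3)}$ in the same way. Your extra remarks (checking $s\ge k$ and noting that counting sign sequences only over-counts USOs) are fine and implicit in the paper's argument.
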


\begin{proof}
By Lemma~\ref{lem:uso-poly}, each P-USO is determined by a vector
of $\ell=n2^n$ nonzero signs of polynomials of degree at most~$n$. The number of
variables is $k=n^2+n$ (equal to the number of entries of the matrix~$M$
and the vector~$q$). By Theorem~\ref{thm:warren}, there are at most
\[
\left(\frac{4e \cdot n \cdot n2^n}{n^2+n}\right)^{n^2+n} 
\le \left(4e\cdot 2^n\right)^{n^2+n} = 2^{O(n^3)}
\]
such sign vectors.
\end{proof}

\subsection{A lower bound for strongly Holt--Klee and locally uniform USOs}

Recall that a \emph{strongly Holt--Klee orientation} is a Holt--Klee
orientation~$\phi$ that remains so after flipping all the edges in
any given subset of coordinates, that is, if $\phi^{(F)}$~is
Holt--Klee for any $F\subseteq[n]$.

A \emph{monotone Boolean function} is a function $f:\{0,1\}^k\to\{0,1\}$
such that if $x\le y$, then $f(x)\le f(y)$; in~``$x\leq y$'',
$\leq$~is to be understood component-wise. Counting monotone Boolean
functions is known as \emph{Dedekind's
problem}~\cite{Ded:Ueber-Zerlegungen}. Let $M$ be the set of
$0,1$-vectors of length~$k$ with exactly $\lfloor k/2\rfloor$ ones.
Following~\cite{Kle:On-Dedekinds-Problem:}, a lower bound of
$2^{\binom{k}{\lfloor k/2\rfloor}}$ on the number of $k$-variate
monotone Boolean functions can be obtained by
taking for each subset $A\subseteq M$ the function~$f_A$ given by
\[f_A(x)=1 \quad\text{iff}\quad \{y\in A:y\le x\}\ne\emptyset.\]
This means, $f_A$~attains value~$1$ exactly on the $0,1$-vectors in~$A$
and all the ones that are larger (w.r.t. the order~$\leq$).

\begin{theorem}\label{thm:lu}
The number of acyclic locally uniform strongly Holt--Klee $n$-USOs is at
least $2^{\binom{n-1}{\lfloor (n-1)/2\rfloor}}=2^{\Omega(2^n/\sqrt{n})}$.
\end{theorem}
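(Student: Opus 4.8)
The plan is to realise every monotone Boolean function $g$ on $n-1$ variables as a distinct USO with all three required properties, so that the lower bound on the number of such $g$ carries over. Write each vertex of the $n$-cube as $(u,b)$ with $u\in\{0,1\}^{n-1}$ and $b\in\{0,1\}$ the last coordinate, and define the \emph{combed} orientation $\phi_g$ by orienting both facets $b=0$ and $b=1$ uniformly (so $(u,b)\toPhi(u\oplus i,b)$ iff $u_i=0$) and orienting the edges in coordinate $n$ by $g$: put $(u,0)\toPhi(u,1)$ iff $g(u)=1$. First I would check that $\phi_g$ is an acyclic USO for \emph{every} $g$. Along any directed edge the Hamming weight $|u|$ is non-decreasing (horizontal edges increase it, a vertical edge fixes it), so a directed cycle would keep $|u|$ constant and hence use two oppositely oriented vertical edges at the same $u$, which is impossible; thus $\phi_g$ is acyclic. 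In any subcube $C$ containing coordinate $n$ the uniform structure forces the sink's $u$-part to be the top vertex $u^\ast$ of $C$, and then $g(u^\ast)$ selects a unique $b$; subcubes avoiding coordinate $n$ lie in a uniform facet. So every subcube has a unique sink.

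Next I would pin down local uniformity. Every $2$-face spanned by two horizontal coordinates lies in a uniform facet and is automatically locally uniform, so only the $2$-faces spanned by a horizontal coordinate $i$ and coordinate $n$ matter. Such a face has a bottom vertex $(u,0)$ with $u_i=0$ whose horizontal edge points out of it, so condition~\eqref{eq:down-uni} never applies, while condition~\eqref{eq:up-uni} fires exactly when the vertical edge also points out, i.e.\ $g(u)=1$, and then demands that the vertical edge at $u\oplus i$ point up as well, i.e.\ $g(u\oplus i)=1$. Since $u\oplus i\ge u$, this is precisely monotonicity of $g$. Hence $\phi_g$ is locally uniform whenever $g$ is monotone.

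The main work is strong Holt--Klee. I would first observe that after relabelling the $u$-cube by the automorphism $u\mapsto u\oplus(F\cap[n-1])$, each reorientation $\phi_g^{(F)}$ is again a combed orientation with uniform facets whose vertical function $h$ is $g$ or $\bar g$ composed with that automorphism; for monotone $g$ every such $h$ is \emph{unate} (monotone in each coordinate up to a fixed sign pattern). Since a subcube of a combed orientation is again combed (or lies in a uniform facet, which is Holt--Klee) and restrictions of unate functions are unate, it suffices to prove that every combed orientation with a unate vertical function has $d$ internally vertex-disjoint directed source-to-sink paths, $d$ being its dimension. I would argue by induction on $d$, splitting along a horizontal coordinate $c$: both facets are then combed with unate restricted vertical functions, hence each carries $d-1$ disjoint source-to-sink paths by induction, and all coordinate-$c$ edges point the same way (from the source facet to the sink facet). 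The hard part is the merge: combining the two $(d-1)$-path systems, through the aligned cross-edges, into $d$ disjoint paths of the whole cube. This is exactly where unateness is indispensable: for a non-unate $g$ it genuinely fails already at $n=3$, for with $g(00)=g(11)=1$, $g(01)=g(10)=0$ both ``upper'' in-neighbours of the sink are fed by a single vertex, so at most two disjoint paths reach the sink. I expect the merge to hinge on showing that monotonicity rules out such bottlenecks, most cleanly via a Menger/vertex-cut argument excluding $s$--$t$ cuts of size below $d$.

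Finally I would collect the count. Distinct monotone functions give distinct orientations (they differ on the vertical edge where they differ), and by the construction preceding the theorem there are at least $2^{\binom{n-1}{\lfloor(n-1)/2\rfloor}}$ monotone functions on $n-1$ variables; each yields an acyclic, locally uniform, strongly Holt--Klee $n$-USO. Stirling's approximation gives $\binom{n-1}{\lfloor(n-1)/2\rfloor}=\Omega(2^n/\sqrt n)$, completing the bound.
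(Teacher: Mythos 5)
Your construction is exactly the paper's (the ``combed'' orientation whose vertical edges are governed by a monotone Boolean function on the first $n-1$ coordinates), and your treatment of acyclicity, the unique-sink property, local uniformity, and the final count is correct and matches the paper's. But the proof has a genuine gap at its technical heart: the strong Holt--Klee property is never actually established. You correctly reduce it to the claim that every combed orientation with a \emph{unate} vertical function admits $d$ internally vertex-disjoint source-to-sink paths, you set up an induction along a horizontal coordinate, and then you explicitly defer the merge of the two $(d-1)$-path systems, saying you ``expect'' it to follow from a Menger-type cut argument. That is a plan, not a proof --- and your own $n=3$ example (with $g(00)=g(11)=1$, $g(01)=g(10)=0$) shows the merge genuinely fails unless unateness is exploited in some specific way, so the step you skipped is precisely where all the content of the theorem lives. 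Nothing in the proposal indicates how monotonicity would defeat small vertex cuts, and it is not routine: the bottleneck vertices in your example sit in the interior of the cube, so one must control cuts globally, not just near the sink.

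For comparison, the paper needs no induction and no flow argument; it writes the paths down. Given a subcube with coordinate set $C\ni n$ and reversal set $F$, it distinguishes whether the source $s$ and sink $t$ lie in the same coordinate-$n$ facet or not. In the first case one takes $d-1$ disjoint paths inside that (uniform, hence Holt--Klee) facet plus one extra path detouring through the opposite facet via the edges $s\to s\oplus n$ and $t\oplus n\to t$. In the second case one orders $C\setminus\{n\}=\{j_1,\dots,j_{d-1}\}$ so that all coordinates of $F$ precede those outside $F$, and takes the $d$ cyclic shifts $P(j_k,\dots,j_{d-1},n,j_1,\dots,j_{k-1})$; these are vertex-disjoint for free, and monotonicity of $f$ is used only once, to check that the single vertical edge $u\to u\oplus n$ on each path exists: if $j_k\notin F$ then $s'\le u'$ gives $f(u')\ge f(s')=1$, and if $j_k\in F$ then $t'\le u'$ gives $f(u')\ge f(t')=1$. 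If you want to salvage your inductive/Menger route you would have to supply an argument of comparable precision for the merge step; as written, the proposal proves the easy assertions of the theorem and leaves the hard one open.
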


\begin{proof}
Given an $(n-1)$-variate monotone Boolean function~$f$, we construct an
$n$-USO~$\phi$ by setting
\begin{align*}
v\toPhi v\oplus i & \text{ if $i\ne n$ and $v_i=0$},\\
v\toPhi v\oplus n & \text{ if $v_n+f(v')=1$,}
\end{align*}
where $v'\in\{0,1\}^{n-1}$ is formed by the initial $n-1$ bits of~$v$,
and addition in the second equation is modulo~$2$. It is easy to see
that this indeed defines a USO: on both facets $\{v: v_n=0\}$ and
$\{v: v_n=1\}$, we have the same (uniform) orientation, and this is
already sufficient to guarantee the USO properties. Between the two
facets, we have $(v',0)\toPhi(v',1)$ if and only if $f(v')=1$.

The USO~$\phi$ is clearly acyclic because any directed walk
in~$\phi$ is monotone on the first $n-1$ bits. It is easy to show local
uniformity too. The assumption of~\eqref{eq:down-uni} is never satisfied.
For~\eqref{eq:up-uni} it suffices to consider the case $j=n$: If $v\toPhi
v\oplus n$, then $f(v')=1$, thus by monotonicity $f((v\oplus i)')=1$.
Hence $v\oplus i\toPhi v\oplus\{i,n\}$.

For the strong Holt--Klee property, let $F\subseteq[n]$ and let
$V'=\{v\oplus I:I\subseteq C\}$ be the vertex set of a subcube with $|C|=:d$. If
$n\notin C$, then $\phi^{(F)}[V']$~is isomorphic to the uniform
orientation, which is easily seen to satisfy the Holt--Klee property. So
suppose $n\in C$. Let $V_0:=\{v\in V':v_n=0\}$ and $V_1:=\{v\in V':v_n=1\}$
and let $s$ be the source and $t$ the sink of~$\phi^{(F)}[V']$. Note that
$\phi^{(F)}[V_0]$ and $\phi^{(F)}[V_1]$ are identical if we truncate the
last coordinate of their vertices, and isomorphic to the uniform USO.

Now we distinguish two cases. First, if both $s$ and~$t$ lie in the
same set $V_0$ or~$V_1$, that is, if $b:=s_n=t_n$, then there are $d-1$
disjoint paths from~$s$ to~$t$ in~$\phi^{(F)}[V_b]$ and another
path obtained by concatenating the edge $s\to s\oplus n$, a path in
$\phi^{(F)}[V_{1-b}]$ from~$s\oplus n$ to~$t\oplus n$, and the edge
$t\oplus n\to t$.

Second, let $b:=s_n=1-t_n$. Without loss of generality we may assume
that $b=0$ and $n\notin F$. Let $P(i_1,\dotsc,i_d)$ denote the
directed path $s\to s\oplus\{i_1\}\to s\oplus\{i_1,i_2\}\to\dotsb\to
s\oplus\{i_1,i_2, \dotsc,i_d\}$. Order the elements of
$C\setminus\{n\}=\{j_1,j_2,\dotsc,j_{d-1}\}$ so that for $j_k\in
F$ and $j_\ell\notin F$ we have $k<\ell$. Since $\phi^{(F)}[V_0]$
and $\phi^{(F)}[V_1]$ are both isomorphic to the uniform orientation
and $s_n\ne t_n$, we have $t=s\oplus C$. Now we claim that the paths
$P(j_1,j_2,\dotsc,j_{d-1},n)$, $P(j_2,j_3,\dotsc,j_{d-1},n,j_1)$,~\dots,
$P(j_{d-1},n,j_1,j_2,\dotsc,j_{d-2})$, $P(n,j_1,j_2,\dotsc,j_{d-1})$
are vertex-disjoint directed paths from~$s$ to~$t$. The only non-obvious
fact to show is that for any~$k\in[d]$, there is a directed edge
$u:=s\oplus\{j_k,j_{k+1},\dotsc,j_{d-1}\}\to
v:=s\oplus\{j_k,j_{k+1},\dotsc,j_{d-1},n\}$.
Note that $u\to v$ if and only if $f(u')=1$ and that $f(s')=f(t')=1$.
If $j_k\notin F$, then $s'\leq u'$ and so $1=f(s')\le f(u')$, thus
$f(u')=1$.
If on the other hand $j_k\in F$, then $t'\le u'$ and so $1=f(t')\le f(u')$,
thus $f(u')=1$. Hence $u\to v$.

Therefore the number of acyclic locally uniform strongly Holt--Klee
$n$-USOs is lower bounded by the number of $(n-1)$-variate monotone
Boolean functions, which concludes the proof.
\end{proof}

\begin{remark}
After swapping the roles of $0$ and $1$ in the $n$th coordinate, the
above construction is the same as Mike Develin's
construction~\cite{Dev:LP-orientations} of many orientations
satisfying the Holt--Klee condition.  Thus,
both Develin's and our construction yield Holt--Klee orientations, but
local uniformity is obtained only in our variant.

The logarithm of the total number of acyclic $n$-USOs is
no more than $2^n\log(n+1)$ \cite{Mat:The-Number}. In comparison,
the exponent in the lower bound obtained from Theorem~\ref{thm:lu} is
of the order $2^n/\sqrt{n}$, and therefore still exponential. 
Restricting to K-USOs, the exponent goes down to a polynomial in~$n$.
\end{remark}


\subsection{A lower bound for K-USOs}\label{sec:KUSO}

\begin{theorem}
The number of distinct K-USOs in dimension~$n$ is at
least~$2^{\Omega(n^3)}$.
\end{theorem}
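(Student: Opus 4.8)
We must construct at least $2^{\Omega(n^3)}$ distinct K-USOs in dimension~$n$. Since a K-matrix is a Z-matrix that is also a P-matrix, the plan is to exhibit a large family of $n\times n$ K-matrices (together with a right-hand side~$q$, or a family of them) whose induced USOs via~\eqref{eq:usodef} are pairwise distinct. The target count $2^{\Omega(n^3)}$ strongly suggests that we should build matrices depending on $\Theta(n^3)$ independent binary choices, or equivalently encode roughly $cn^3$ bits of combinatorial freedom into the off-diagonal entries of a Z-matrix in such a way that different choices produce different sign sequences $\sigma(M,q)$.

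**The construction I would attempt.** First I would look for a structured parametrised family of K-matrices with many free parameters but a transparent LCP behaviour. A natural candidate is to take $M = I - N$ where $N$ is a non-negative matrix with small, well-separated entries (so that $M$ is diagonally dominant and hence a P-matrix, and clearly a Z-matrix since $N\ge 0$ kills the off-diagonal signs). The entries of $N$ can be chosen from a carefully spaced geometric scale — say $N_{ij} = \varepsilon^{e_{ij}}$ for integer exponents $e_{ij}$ drawn from a range of size $\Theta(n)$ — so that the sign of each relevant minor or each coordinate $(A_{B(v)}^{-1}q)_i$ is governed by a single dominant monomial. With $n^2-n$ off-diagonal positions and $\Theta(n)$ admissible exponents each, one gets $\Theta(n^3)$ bits of information, matching the target. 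The key technical step is then to show that distinct exponent patterns $\{e_{ij}\}$ force distinct USOs: I would identify, for each off-diagonal pair, a specific vertex $v$ and coordinate $i$ at which the sign in~\eqref{eq:usodef} flips precisely when the exponent crosses a threshold, so that the exponent pattern can be read back off from the orientation. Throughout, local uniformity (guaranteed for all K-USOs by~\cite{FonFukGar:Pivoting}) is automatic and need not be separately engineered; what matters is only the P-matrix and Z-matrix properties and the injectivity of the map from parameters to orientations.

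**The main obstacle.** The hard part will be the simultaneous reconciliation of two competing demands. On one hand, we need the matrix to remain a \emph{P-matrix} across the entire parameter family — this constrains how large the off-diagonal entries may be, since Z-matrices are P-matrices exactly when they are (inverse-nonnegative / diagonally) stable, so the entries cannot be too big. On the other hand, we need enough \emph{variation} in the induced signs that $\Theta(n^3)$ of the parameters are individually recoverable from the orientation, which pushes us toward larger, more influential entries. The separated-scales idea is meant to thread this needle: with exponentially spaced entries, a single monomial dominates each sign-determining polynomial $(-1)^{|B(v)|}\sum_s q_s(-1)^{i+s}M_{si}$ from Lemma~\ref{lem:uso-poly}, so we can both keep $M$ safely inside the P-matrix cone and read off the dominant exponent from the resulting sign. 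Verifying that the dominant monomial genuinely controls the sign — i.e.\ that the spacing is wide enough that lower-order terms never overturn it, uniformly over all $2^n$ bases $B(v)$ and all coordinates $i$ — is the delicate estimate, and it is here that I expect the bulk of the work to lie.

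**Counting wrap-up.** Once injectivity of parameters $\mapsto$ USO is established on a parameter set of size $2^{\Omega(n^3)}$, the theorem follows immediately: distinct parameters give distinct sign sequences $\sigma(M,q)$, hence distinct orientations, and every orientation produced is a K-USO by construction. I would also double-check the consistency of this $2^{\Omega(n^3)}$ lower bound with the $2^{O(n^3)}$ upper bound for P-USOs from Theorem~\ref{thm:p-upper} (since K-USOs are P-USOs), which confirms the exponent $n^3$ is the right order and pins down the number of K-USOs, and thereby of LP-USOs and P-USOs, at $2^{\Theta(n^3)}$.
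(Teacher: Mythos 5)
Your overall plan (a parametrised family of Z-matrices that remain P-matrices, plus injectivity of the map from parameters to USOs) is the same in spirit as the paper's, and your family $M=I-N$ with $N\ge 0$ small is indeed a legitimate source of K-matrices. However, the counting at the heart of your proposal is wrong: $n^2-n$ off-diagonal positions with $\Theta(n)$ admissible exponents each gives $\Theta(n)^{n^2-n}=2^{\Theta(n^2\log n)}$ parameter settings, i.e.\ $\Theta(n^2\log n)$ bits, \emph{not} $\Theta(n^3)$ bits. To reach $2^{\Omega(n^3)}$ with only $\Theta(n^2)$ real parameters, each parameter must on average contribute $\Theta(n)$ bits, i.e.\ admit $2^{\Theta(n)}$ values that are pairwise distinguishable by the induced orientation; polynomially many choices per entry can never suffice.

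Worse, the mechanism you propose --- geometric scale separation so that ``a single dominant monomial governs each sign'' --- works directly against this requirement. If each sign $(A_{B(v)}^{-1}q)_i$ is decided by one dominant monomial, then every sign is robust under fine variation of the parameters, so an individual entry can only be recovered coarsely (namely, which monomial dominates), giving at most polynomially many distinguishable values per entry and hence at most $2^{O(n^2\log n)}$ USOs in total. The paper does the opposite: for an upper triangular $M(\beta)$ with off-diagonal entries $-1-\beta_{i,j}$ and alternating $q$, equation~\eqref{eq:tdash} shows that for suitable bases the relevant sign is that of $\beta_{r,m}-t'_{B(v),r,m}(\beta)$, a \emph{fine} comparison of the single parameter $\beta_{r,m}$ against the values of a polynomial in the earlier parameters at the $2^{(m-r-1)/2}$ vertices $v$ of a subcube $V'$. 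Lemma~\ref{lem:tees} guarantees that these exponentially many thresholds can be made pairwise distinct (condition~\eqref{eq:generic}), so $\beta_{r,m}$ has $2^{(m-r-1)/2}+1$ distinguishable choices; the choices are made sequentially along the order $\prec$, and Lemma~\ref{lem:betas} keeps all $|\beta_{i,j}|<1$ so that $M(\beta)$ stays a K-matrix. Summing the exponents $(m-r-1)/2$ over all pairs $(r,m)$ yields $\Omega(n^3)$. Without a device of this kind --- exponentially many edge-orientation thresholds per parameter, which is incompatible with your dominant-monomial regime --- your construction cannot achieve the claimed bound.
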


\begin{proof}
Consider the upper triangular matrix
\[
M(\beta) =
\begin{pmatrix}
1& -1-\beta_{1,2}& -1-\beta_{1,3}& \hdots & -1-\beta_{1,n}\\
0& 1& -1-\beta_{2,3} & \hdots & -1-\beta_{2,n}\\
\hdotsfor5\\
0& 0& 0& \hdots & -1-\beta_{n-1,n}\\
0& 0 & 0 & \hdots & 1
\end{pmatrix}
\]
and the vector $q=(-1,1,-1,\dotsc,(-1)^n)^T$. If the
parameters~$\beta_{i,j}$ are sufficiently small in absolute value,
then $M(\beta)$ is a K-matrix. We will now examine how the
choice of the~$\beta_{i,j}$ influences the USO induced by the
$\LCP(M(\beta),q)$. Our goal is to show that we can make $2^{\Omega(n^3)}$
choices, each of which induces a different USO.

From now on, we will write
\[
(i,j)\prec (i',j')
	\text{\quad for\quad} (j<j') \text{ or } (j=j' \text{ and } i>i').
\]
Note that $\prec$ is a total ordering on $\{(i,j)\in[n]^2:i<j\}$.
The strategy will be to choose the values of the $\beta_{i,j}$ in
the order given by~$\prec$, that is, from left to right and in each
column from bottom to top. We show that for about half of
the~$\beta_{i,j}$'s there is a number of choices exponential in
$j-i$ such that each of these choices determines a different
orientation on a certain subset of edges, which will be independent
of all subsequently made choices. But first we examine the expressions
that determine the orientation.

Let $B\subseteq[n]$ and, analogously to the definition of~$A_B$,
let $A_B(\beta)$ be the matrix whose $j$th column is the $j$th column
of~$-M(\beta)$ if $j\in B$, and the $j$th column of~$I_n$ otherwise.

\begin{lemma}\label{lem:invAB}
The entries of the inverse matrix $(A_B(\beta))^{-1}$ of $A_B(\beta)$ satisfy
\begin{equation}\label{eq:ainv}
\sigma_r\cdot\bigl((A_B(\beta))^{-1}\bigr)_{r,s} =
\begin{cases}
1& \text{if $r=s$,}\\
0& \text{if $r>s$ or if $r<s$ and $s\notin B$,}\\
2^{p(B,r,s)} + \beta_{r,s} + t_{B,r,s}(\beta)& \text{if $r<s$ and $s\in B$,}
\end{cases}
\end{equation}
where $\sigma_r = -1$ if $r\in B$ and $\sigma_r=1$ if $r\notin B$,
$p(B,r,s)=|\{j\in B: r<j<s\}|$ and $t_{B,r,s}(\beta)$ is a polynomial
with positive coefficients and no constant term, in exactly the
variables $\beta_{i,j}$ for 
\begin{equation}
\label{eq:Jrs}
(i,j)\in J_{r,s}(B) := \{(i,j)\in (B\cup\{r\})\times B: r\leq i<j, (i,j)\prec(r,s)\}.
\end{equation}
\end{lemma}

\begin{proof}
  The inverse of an upper triangular matrix is again upper triangular.
  Its diagonal entries are the reciprocals of the diagonal entries of
  the original matrix; in our case, they are~$\pm1$. Moreover, for
  $s\notin B$, the $s$th column of $(A_B(\beta))^{-1}$ equals the
  $s$th unit vector $e_s$, the unique solution of the equation system
  $A_B(\beta)x=e_s$.

So it remains to examine the above-diagonal entries in columns
belonging to~$B$. Such entries only exist if $B\setminus\{1\}\neq\emptyset$,
and they are indexed by
$J(B)=\{(r,s)\in[n]\times B\colon r<s\}$. Consider the ordering~$\prec$
defined above, restricted to~$J(B)$. The least element of~$J(B)$
with respect to~$\prec$ is $(s-1,s)$, where $s$~is the least element
of $B\setminus\{1\}$. Multiplying row ${s-1}$ of~$A_B(\beta)$ with
the $s$th column of $(A_B(\beta))^{-1}$ reveals that
$\left((A_B(\beta))^{-1}\right)_{s-1,s}=\sigma_{s-1}\cdot(1+\beta_{s-1,s})$.
Thus \eqref{eq:ainv}~holds for the $\prec$-minimum $(r,s)$ in~$J(B)$.

For any other $(r,s)\in J(B)$, assume that \eqref{eq:ainv}~holds
for all $(k,s)\prec(r,s)$. Multiplying the $r$th row of~$A_B(\beta)$
by the $s$th column of~$(A_B(\beta))^{-1}$ shows that
\begin{eqnarray*}
\underbrace{\sigma_r}_{A_B(\beta){r,r}}\cdot\left( (A_B(\beta))^{-1}\right)_{r,s} 
&+& \sum_{\substack{k\in B\\ r<k<s}}
\underbrace{(1+\beta_{r,k})}_{A_B(\beta)_{r,k}}\cdot
\underbrace{\sigma_k \cdot
	\left( 2^{p(B,k,s)} + \beta_{k,s} + t_{B,k,s}(\beta) \right)}_{((A_B(\beta))^{-1})_{k,s}} \\
&+& \underbrace{(1+\beta_{r,s})}_{A_B(\beta)_{r,s}} \cdot 
\underbrace{\sigma_s}_{((A_B(\beta))^{-1})_{s,s}} =0.
\end{eqnarray*}
As $\sigma_k=\sigma_s=-1$ for $k,s\in B$, we have
\begin{equation}
\label{eq:tBrs}
\begin{split}
\sigma_r\cdot\left((A_B(\beta))^{-1}\right)_{r,s} &=
\beta_{r,s} + 1 + \sum_{\substack{k\in B\\r<k<s}}(1+\beta_{r,k})
	\left(2^{p(B,k,s)}+\beta_{k,s}+t_{B,k,s}(\beta)\right)\\
&= \beta_{r,s} + 1 + \sum_{\substack{k\in B\\r<k<s}} 2^{p(B,k,s)}
	+ t_{B,r,s}(\beta) \\
&= \beta_{r,s} + 2^{p(B,r,s)} + t_{B,r,s}(\beta),
\end{split}
\end{equation}
where 
\begin{equation}
\label{eq:tBrs-rec}
t_{B,r,s}(\beta) := \sum_{\substack{k\in B\\r<k<s}} \left(
2^{p(B,k,s)}\beta_{r,k} + \beta_{k,s} + \beta_{r,k}\beta_{k,s}
+ t_{B,k,s}(\beta) + \beta_{r,k} t_{B,k,s}(\beta)
\right )
\end{equation}
is a polynomial with positive coefficients and no constant term, as
required. The variables appearing in this polynomial are indexed by 
the set
\[
\bigcup_{\substack{k\in B\\r<k<s}}\bigl(\{(r,k),(k,s)\}\cup
  J_{k,s}(B)\bigr) = J_{r,s}(B).
\qedhere
\]
\end{proof}

We next investigate the vectors $(A_B(\beta))^{-1}q$ whose sign
patterns determine the orientations of the edges in the unique sink
orientation; see~(\ref{eq:usodef}). First we identify a large number of
bases $B$ for which the signs in $(A_B(\beta))^{-1}q$ are sensitive to
very small changes in $\beta$.

Let $B\subseteq[n]$ be a basis such that, for $m=\max B$, we have $s\equiv
m+1\pmod{2}$ for each $s\in B\setminus\{m\}$. Then $q_m\cdot q_s=-1$ for
each $s\in B\setminus\{m\}$, and hence for all $r<m$ such that $r\equiv m+1\pmod2$,
\begin{align}
\sigma_r \cdot \left( (A_B(\beta))^{-1}q\right)_r &=
(-1)^m
\biggl(\beta_{r,m}+ t_{B,r,m}(\beta) -
	\sum_{\substack{s\in B\\r<s<m}} \bigl(\beta_{r,s}+t_{B,r,s}(\beta)\bigr)\biggr)
\notag\\
&=
(-1)^m
\bigl( \beta_{r,m} - t'_{B,r,m}(\beta)\bigr);
\label{eq:tdash}
\end{align}
the parity condition on $r$ and $m$
ensures that the constant terms sum to zero. Each $t'_{B,r,m}(\beta)$
is some polynomial in variables $\beta_{i,j}$ for $(i,j)\in\{(i,j)\in
[n]\times B: i<j,\ (i,j)\prec(r,m)\}$ with no constant term.
In particular, this implies that in the corresponding USO, the
orientation of the $r$th edge incident to the vertex corresponding
to the basis~$B$ depends only on the values of~$\beta_{i,j}$ with
$(i,j)\preceq(r,m)$.

Now let $r,m\in[n]$, $r<m$, $r\equiv m+1\pmod2$. Let
\[ C = C(r,m) = \{i\in[n]: r<i<m,\ i\equiv m+1\!\!\!\!\!\pmod{2}\}\]
and let
\[ V' = V'(r,m) = \{ (0\oplus m) \oplus I : I\subseteq C\}.\]
Note that $|C|=(m-r-1)/2$ and so $|V'|=2^{(m-r-1)/2}$.

Furthermore, suppose for a moment that the values of $\beta_{i,j}$ are fixed for
all $(i,j)\prec(r,m)$, and that these values satisfy:
\begin{equation}
\label{eq:generic}
v,v'\in V',\ v\ne v' \quad \Longrightarrow\quad 
	t'_{B(v),r,m}(\beta) \ne t'_{B(v'),r,m}(\beta).
\end{equation}
For each $v\in V'$, the direction of the edge between $v$ and $v\oplus r$
in the USO induced by $\LCP(M(\beta),q)$ is by~\eqref{eq:tdash} determined by the sign of the
difference $\beta_{r,m} - t'_{B(v),r,m}(\beta)$. By~\eqref{eq:generic},
the currently fixed values of $t'_{B(v),r,m}$ for $v\in V'$ are all 
distinct and thus they split the reals into $|V'|+1$ intervals. Hence
there are $|V'|+1$ choices for $\beta_{r,m}$ so that the resulting
USOs will differ from one another in the orientation of at least one of
these edges.

What happens, though, if we are about to choose~$\beta_{r,m}$ and
\eqref{eq:generic}~is not satisfied?  Then we have to revise the
choices we have made so far. Slightly perturbing each~$\beta_{i,j}$
with $(i,j)\prec(r,m)$ will not change the orientation (because
each $\beta_{i,j}$ is chosen in the \emph{interior} of one of the
$|V'|+1$ intervals mentioned above); the next
lemma implies that it will make \eqref{eq:generic} satisfied.

\begin{lemma}
\label{lem:tees}
Let $r,m\in[n]$, $r<m$, $r\equiv m+1\pmod 2$ and let $B_1,B_2\subseteq
[n]$ be bases such that $\max B_1 = \max B_2 = m$, $\min B_1>r$, $\min
B_2>r$, and 
that $i\equiv m+1\pmod 2$ for all $i\in(B_1\cup B_2)\setminus\{m\}$. Then the
polynomial $t'_{B_1,r,m}(\beta)-t'_{B_2,r,m}(\beta)$ is identically
zero if and only if $B_1=B_2$.
\end{lemma}

\begin{proof}
First, from \eqref{eq:tdash} we have:
\begin{equation}
\label{eq:tdash-exp}
t'_{B,r,m}(\beta) = -t_{B,r,m}(\beta) + \sum_{\substack{s\in B\\r<s<m}}
	\bigl(\beta_{r,s} + t_{B,r,s}(\beta) \bigr).
\end{equation}
Assume that $B_1\ne B_2$. Without loss of generality, there exists
some $u\in B_1\setminus B_2$; by assumption $u>r$. It follows
from~(\ref{eq:tdash-exp}) and the properties of the polynomials $t$
guaranteed by Lemma~\ref{lem:invAB} that $t'_{B_1,r,m}(\beta)$
contains the variable $\beta_{u,m}$ while $t'_{B_2,r,m}(\beta)$ does not.
Hence $t'_{B,r,m}(\beta)-t'_{B',r,m}(\beta)$ is not identically
zero. The converse implication is trivial.
\end{proof}

The options to choose $\beta_{r,m}$ are, of course, not independent
of the values of the other~$\beta_{i,j}$'s. However, they depend only
on the $\beta_{i,j}$'s with $(i,j)\prec(r,m)$. Hence it is possible to
make the choices sequentially in the order given by~$\prec$; starting
with~$\beta_{1,2}$ and finishing with~$\beta_{1,n}$.
The values of $\beta_{r,m}$ for $r\equiv m\pmod 2$ can be chosen
arbitrarily, e.g., $\beta_{r,m}=0$.

Therefore the number of distinct USOs induced by $\LCP(M(\beta),q)$
for various values of $\beta_{i,j}$, as described above, is at least
\begin{equation}\label{eq:bigprod}
\prod_{m=1}^{n} \prod_{\substack{1\le r<m\\r\equiv m+1\!\!\!\!\!\pmod2}}
	\left(2^{(m-r-1)/2}+1\right) =
\prod_{m=1}^{n} \prod_{i=0}^{\lfloor m/2\rfloor-1}
	\left( 2^{i} +1 \right) =
2^{\Omega(n^3)}.
\end{equation}

Finally, it remains to show that the values of all $\beta_{i,j}$'s
can be chosen to satisfy $|\beta_{i,j}|<1$, so that $M(\beta)$~would
be a K-matrix. That follows from the next lemma.

\begin{lemma}
\label{lem:betas}
Whenever $t'_{B,r,m}$ as in (\ref{eq:tdash-exp}) is defined, let
\[\bar\beta=\max\bigl\{|\beta_{i,j}|: (i,j)\in
[n]\times B,\  i<j,\ (i,j)\prec(r,m)\bigr\}.\]
If $\bar\beta<1$, then $|t'_{B,r,m}(\beta)| < 4^{m-r+1} \bar\beta$.
\end{lemma}

\begin{proof}
By definition, $p(B,j,s)\leq s-j-1$ for all eligible $B,j,s$. Now
we claim that
\begin{equation}
\label{eq:bd1}
|t_{B,r,s}(\beta)| \leq 4^{s-r} \bar\beta ,
\end{equation}
with $t_{B,r,s}(\beta)$ as in (\ref{eq:tBrs-rec}).
If $s-r=1$ or $\bar\beta=0$, then by \eqref{eq:tBrs-rec} we have
$t_{B,r,s}(\beta)=0\le 4^{s-r}\bar\beta$. Otherwise, by induction
on $s-r$, again using~\eqref{eq:tBrs-rec} and $\bar\beta^2<\bar\beta<1$, 
we have
\begin{align*}
|t_{B,r,s}(\beta)|
&\le \sum_{j=r+1}^{s-1} \left(2^{p(B,j,s)}\bar\beta + \bar\beta +
  \bar\beta^2 + 4^{s-j}\bar\beta +  4^{s-j}\bar\beta^2\right)\\
&\le \sum_{j=r+1}^{s-1} \left(2^{s-j-1} + 2 + 2\cdot
  4^{s-j}\right) \bar\beta 
\le \sum_{j=r+1}^{s} \left(3\cdot
  4^{s-j}\right) \bar\beta =(4^{s-r}-1)\bar\beta \leq 4^{s-r}\bar\beta.
\end{align*}
Thus \eqref{eq:bd1}~holds.

Finally, unless $\bar\beta=0$, in which case $t'_{B,r,s}(\beta)=0$,
we conclude from~\eqref{eq:tdash-exp} and \eqref{eq:bd1} that
\begin{align*}
|t'_{B,r,m}(\beta)|
&\leq \biggl(4^{m-r} + (m-r-1) + \sum_{s=r+1}^{m-1} 4^{s-r}\biggr)\bar\beta \\
&=\bigl(m-r-1 + \textstyle\frac13(4^{m-r+1}-4) \bigr) \bar\beta \\
&\leq 4^{m-r+1} \bar\beta.
\qedhere
\end{align*}
\end{proof}

The first $\beta$ to be chosen is $\beta_{1,2}$, and its sign
determines the direction of the edge between $(0,1,0,\dotsc,0)$ and
$(1,1,0,\dotsc,0)$. If $\beta_{1,2}$ is chosen to be $\pm (4+\epsilon)^{-n^3}$,
then all subsequent choices can be made in such a way that
$|\beta_{r,s}|<1$ for all $r,s$.
\end{proof}

\subsection{The number of USOs from a fixed matrix}\label{sec:fixedM}

In this section, we prove the following 

\begin{theorem} For a P-matrix $M\in\R^{n\times n}$, let $u(M)$ be the number
of USOs determined by LCPs of the form $\LCP(M,q)$ for $q\in\R^n$. Furthermore,
define $u(n) = \max_{M} u(M)$, where the maximum is over all $n\times n$
P-matrices. Then
\[
u(n) = 2^{\Theta(n^2)}.
\] 
\end{theorem}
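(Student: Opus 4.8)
The plan is to prove the two bounds separately, and for both I would rely on the polynomial description already developed in Lemma~\ref{lem:uso-poly}. For the \emph{upper bound}, observe that for a \emph{fixed} P-matrix $M$, each of the $n2^n$ entries of $\sigma(M,q)$ is, by Lemma~\ref{lem:uso-poly}, the sign of a polynomial in the entries of $M$ and $q$; but now $M$ is constant, so these become polynomials in the $n$ variables $q_1,\dotsc,q_n$ alone, each still of degree at most~$n$. Applying Warren's theorem (Theorem~\ref{thm:warren}) with $k=n$ variables, $s=n2^n$ polynomials, and degree bound $d=n$, the number of sign vectors with entries in $\{+1,-1\}$ is at most
\[
\left(\frac{4e\cdot n\cdot n2^n}{n}\right)^{n} = \left(4e\cdot n\cdot 2^n\right)^{n} = 2^{O(n^2)}.
\]
Since every USO of the form $\LCP(M,q)$ is determined by such a sign vector, this gives $u(M)\le 2^{O(n^2)}$ for every P-matrix $M$, hence $u(n)=2^{O(n^2)}$.

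For the \emph{lower bound} I would exhibit a single P-matrix $M$ for which varying $q$ produces $2^{\Omega(n^2)}$ distinct USOs. The natural candidate is an upper-triangular matrix (for instance the identity, or a perturbation thereof), because for such $M$ the quantities $(A_{B(v)}^{-1}q)_i$ have a clean triangular structure, each being an explicit linear expression in the coordinates of~$q$. The idea is that for the uniform or near-uniform orientation the sign of the relevant entry at vertex $v$ in direction $i$ is controlled by a sign like $\sgn(q_i)$ or $\sgn$ of a short partial sum of the $q_s$, and by choosing the magnitudes of the $q_s$ across widely separated scales one can independently flip many of these signs. Concretely, one partitions the $n2^n$ edge-orientations and shows that at least $\Omega(n^2)$ of them can be set independently: a dimension count suggests that $n$ free parameters (the $q_s$), each contributing on the order of $n$ independent sign thresholds as the basis $B(v)$ ranges over its $2^n$ values, yields on the order of $n^2$ independently controllable bits.

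The main obstacle is the lower bound: the upper bound is a direct substitution into Warren's theorem, but the lower bound requires producing an explicit family of size $2^{\Omega(n^2)}$ and \emph{verifying that the resulting orientations are genuinely distinct USOs}. The delicate point is that the $n2^n$ signs are highly dependent---they all come from only $n$ underlying variables---so one cannot flip them arbitrarily; the argument must identify a collection of $\Omega(n^2)$ edges whose orientations can be set \emph{independently} by a suitable sequential choice of the scales of $q_1,\dotsc,q_n$, in the spirit of the sequential choice of the $\beta_{i,j}$ in the K-USO lower bound (Section~\ref{sec:KUSO}). I would make the independence rigorous by choosing the $|q_s|$ to be separated by enormous multiplicative gaps, so that in each relevant signed sum exactly one term dominates and its sign can be prescribed without disturbing previously fixed signs; this reduces the distinctness of the USOs to the distinctness of the corresponding sign patterns, completing the $2^{\Omega(n^2)}$ lower bound and hence $u(n)=2^{\Theta(n^2)}$.
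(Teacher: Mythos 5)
Your upper bound is correct and is essentially the paper's argument in different clothing: with $M$ fixed, the polynomials of Lemma~\ref{lem:uso-poly} become linear forms in $q_1,\dotsc,q_n$, and Warren's theorem (Theorem~\ref{thm:warren}) gives $2^{O(n^2)}$ sign patterns; the paper instead counts cells of the arrangement of the $n2^n$ hyperplanes $\bigl\{x\in\R^n:(A_{B(v)}^{-1}x)_i=0\bigr\}$, which for linear polynomials is the same bound.

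The lower bound, however, has a genuine gap, and it sits exactly where you located the difficulty. First, your dimension count does not add up: a single real parameter $q_s$ that enters every relevant expression linearly can be positioned relative to $n$ thresholds in at most $n+1$ ways, so it contributes $O(\log n)$ bits, not $n$ bits; the scheme as described therefore yields at most $2^{O(n\log n)}$ USOs. To reach $2^{\Omega(n^2)}$, each parameter must face $2^{\Omega(n)}$ \emph{pairwise distinct} thresholds and be placeable in any gap between consecutive ones. Second, the technique you propose for making independence rigorous---separating the $|q_s|$ by enormous multiplicative gaps so that one term dominates each sum---is self-defeating: if a single term dominates every sum $\sum_s (A_{B(v)}^{-1})_{is}q_s$, then every sign is determined by the scale ordering of the $q_s$ together with their signs, giving at most $n!\,2^n=2^{O(n\log n)}$ orientations. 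Domination destroys precisely the near-ties that carry the extra entropy. The paper's proof supplies exactly what is missing: it reuses the construction of Section~\ref{sec:KUSO}, fixing $\beta_{i,j}$ for $j<n$ (which yields an $(n-1)\times(n-1)$ K-matrix $M'$) and varying only the last column $b$ of the $n\times n$ matrix; there each parameter $\beta_{r,n}$ is measured against the $2^{(n-r-1)/2}$ thresholds $t'_{B(v),r,n}(\beta)$, whose pairwise distinctness is certified by Lemma~\ref{lem:tees}, so the last column alone realizes $\prod_i(2^i+1)=2^{\Omega(n^2)}$ orientations. The step that converts this into a statement about a \emph{fixed} matrix is the observation that on the subcube $\{v:v_n=1\}$ the induced USO is the one generated by $\LCP(M',q'-bq_n)$, so varying the matrix column $b$ is equivalent to varying the right-hand side for the fixed matrix $M'$. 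Your instinct (triangular matrix, sequential choices in the spirit of Section~\ref{sec:KUSO}) points in the right direction, but without exponentially many distinct thresholds per parameter---and a distinctness lemma to certify them---the count cannot exceed $2^{O(n\log n)}$.
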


\begin{proof}
  Let us first show the upper bound. For a fixed $M$, we consider the
  $n2^n$ hyperplanes of the form
  \[\bigl\{x\in\R^n: (A_{B(v)}^{-1}x)_i = 0\bigr\}.\] These hyperplanes
  determine an \emph{arrangement} that subdivides $\R^n$ into faces of
  various dimensions. Each face is an inclusion-maximal region over
  which the sign vector $\bigl(\sgn(A_{B(v)}^{-1}x)_i:
  v\in\{0,1\}^n,i\in[n]\bigr)$ is constant. The faces of dimension $n$ are
  called \emph{cells}; within a cell, the sign vector is nonzero
  everywhere.  From Section~\ref{sec:upperboundP} we know that
  $\LCP(M,q)$ yields a USO whenever $q$ is in some cell, and for all
  $q$ within the same cell, $\LCP(M,q)$ yields the same USO. Thus,
  the number of cells in the arrangement is an upper bound for the
  number $u(M)$ of different USOs induced by $M$. It is
  well-known~\cite{e-acg-87} that the number of cells in an
  arrangement of $N$ hyperplanes in dimension $n$ is $O(N^n)$. In our
  case, we have $N=n2^n$ which shows that $u(M)= O((n2^n)^n) =
  2^{O(n^2)}$ for all $M$.

  For the lower bound, note that we have in particular constructed in
  Section~\ref{sec:KUSO} a K-matrix $M'\in\R^{(n-1)\times(n-1)}$ (resulting
  from fixing $\beta_{i,j}$ for all $j<n$), with the following property:
  for a suitable right-hand side $q$, $\LCP(M,q)$ with  
  \[M = \left(\begin{array}{cc}
      M' & b \\
      0 & 1
      \end{array}\right)
    \]
  yields $2^{\Omega(n^2)}$ different USOs in the subcube $F$
  corresponding to vertices with $v_n=1$, when $b$ is varied. This
  number is the term for $m=n$ in (\ref{eq:bigprod}).

  Since the subcube $F$ corresponds to the solutions of $w-Mz=q$ that
  satisfy $w_n=0$, we have $z_n=q_n$ within $F$. With
  $w'=(w_1,\ldots,w_{n-1})^T$, $z'=(z_1,\ldots,z_{n-1})^T$ and
  $q'=(q_1,\ldots,q_{n-1})^T$, it follows that
  \[w-Mz=q, \quad w^Tz = 0, \quad w_n=0\]
  if and only if
  \[w'-M'z'= q'+bq_n, \quad w'^Tz' = 0, \quad z_n = q_n.\]
  
  This is easily seen to imply that the induced USO in the subcube $F$
  is generated by $\LCP(M',q'+bq_n)$. Thus, $u(M')=2^{\Omega(n^2)}$, and the
  theorem is proved. 
\end{proof}

\section{Locally uniform USOs and K-matrices}
\label{sec:loc}

Finally we present a note on the relationship between K-matrices and
locally uniform USOs.

\begin{theorem}
Let $M$ be a P-matrix. $M$ is a $K$-matrix if and only if for all
nondegenerate $q$, the USO induced by $\LCP(M,q)$ is locally uniform. 
\end{theorem}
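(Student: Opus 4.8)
The plan is to prove both directions separately, exploiting the local structure of the USO induced by $\LCP(M,q)$ via the explicit sign formula~\eqref{eq:usodef}. The key observation is that local uniformity is a condition on $2$-dimensional subcubes, and each such condition can be translated, via the definition $v\toPhi v\oplus i \Leftrightarrow (A_{B(v)}^{-1}q)_i<0$, into a statement relating signs of $2\times 2$ principal subdeterminants of $M$ to signs of its off-diagonal entries. The forward direction (K-matrix $\Rightarrow$ locally uniform) is already established by~\cite{FonFukGar:Pivoting}, which is cited earlier as ``every K-USO is locally uniform''; so the real content is the converse.

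For the converse, I would argue contrapositively: assuming $M$ is a P-matrix but \emph{not} a K-matrix, I construct a nondegenerate $q$ whose induced USO violates local uniformity. Since $M$ is a P-matrix that fails to be a Z-matrix, some off-diagonal entry $M_{ij}$ (with $i\ne j$) is strictly positive. The plan is to localise the failure to a single $2$-dimensional subcube spanned by coordinates $i$ and $j$. Concretely, I would pick a vertex $u$ with $u_i=u_j=0$ and compute the four relevant sign conditions of~\eqref{eq:up-uni} and~\eqref{eq:down-uni} at the four vertices $u$, $u\oplus i$, $u\oplus j$, $u\oplus\{i,j\}$. The local orientation of this square is governed by the $2\times 2$ principal submatrix indexed by $\{i,j\}$ together with the corresponding entries of the transformed right-hand side $A_{B(u)}^{-1}q$. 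Because $M$ is a P-matrix the diagonal dominates in sign, but the positive off-diagonal entry $M_{ij}>0$ can be made to flip one of the two edges that local uniformity requires to point ``the same way,'' producing exactly a violation of~\eqref{eq:up-uni} or~\eqref{eq:down-uni}. The freedom in choosing $q$ (equivalently, in choosing which cell of the sign arrangement from Section~\ref{sec:fixedM} we land in) is what lets us realise the violating configuration while keeping $q$ nondegenerate.

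The main obstacle will be making the reduction to a $2$-dimensional subcube rigorous, since the sign of $(A_{B(v)}^{-1}q)_i$ depends on the \emph{entire} basis $B(v)$, not just on the $\{i,j\}$-submatrix. I expect the cleanest route is to use a base change / Schur-complement argument: fixing the coordinates outside $\{i,j\}$ to a suitable bit pattern effectively passes to a $2\times 2$ LCP whose matrix is a principal ``pivotal transform'' of $M$, and principal pivotal transforms preserve the P-property while the relevant off-diagonal sign can be read off directly. One must verify that a single positive off-diagonal entry survives as a positive entry in some such $2\times 2$ induced subproblem, and that the diagonal entries of the transform remain positive (which follows from the P-property). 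Once the problem is reduced to the $2\times 2$ case, the computation of the four edge directions is elementary and the violation of local uniformity is immediate, completing the contrapositive.
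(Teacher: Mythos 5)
Your outline does follow the same strategy as the paper's proof: quote \cite{FonFukGar:Pivoting} for the ``only if'' direction, and for the converse argue contrapositively, using a positive off-diagonal entry $m_{ij}>0$ to build a nondegenerate $q$ whose USO violates~\eqref{eq:up-uni} on a $2$-dimensional subcube. However, the step you yourself call ``the main obstacle''---making the reduction to that subcube rigorous---is precisely the step you do not carry out, and the route you sketch for it is a genuine gap. You propose to fix the coordinates outside $\{i,j\}$ to ``a suitable bit pattern,'' pass to a $2\times 2$ principal pivotal transform of~$M$, and then ``verify that a single positive off-diagonal entry survives as a positive entry in some such $2\times 2$ induced subproblem.'' You give no argument for this survival claim, and it is not the right tool: a principal pivotal transform can change the signs of off-diagonal entries, so without specifying the pattern there is nothing forcing the positivity to persist. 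As written, the decisive computation is absent.

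The gap closes immediately once you make the one choice the paper makes: take $u=0$, i.e.\ fix all coordinates outside $\{i,j\}$ (after relabelling, $\{1,2\}$) to~$0$. Then every vertex $v$ of that $2$-face has $B(v)\subseteq\{1,2\}$, and each $A_{B(v)}$ is block lower triangular with its top-left $2\times2$ block built from $Q=\left(\begin{smallmatrix}m_{11}&m_{12}\\ m_{21}&m_{22}\end{smallmatrix}\right)$; hence the first two rows of $A_{B(v)}^{-1}$ vanish outside columns $1,2$, and the face is oriented exactly by $\LCP\bigl(Q,(q_1,q_2)^T\bigr)$. No pivotal transform occurs, and the positive off-diagonal entry is present by assumption, since $Q$ is the \emph{actual} principal submatrix of~$M$. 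The paper then finishes with an explicit right-hand side: for $m_{12}>0$ take $q=(-m_{12},-(m_{22}+1),0,\dots,0)$, so that $q_1,q_2<0$ makes both edges at $B=\emptyset$ outgoing, while $(A_{\{1,2\}}^{-1}q)_1=-m_{12}/(m_{11}m_{22}-m_{21}m_{12})<0$ makes the direction-$1$ edge at $B=\{1,2\}$ outgoing as well, contradicting~\eqref{eq:up-uni}; the case $m_{21}>0$ is symmetric, and a small perturbation of~$q$ restores nondegeneracy without disturbing these finitely many strict inequalities. So your plan is salvageable, but only after replacing your unproven ``survival'' step by this concrete choice of vertex and right-hand side.
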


\begin{proof}
The ``only-if'' direction is Proposition~5.3
in~\cite{FonFukGar:Pivoting}. For the if-direction, suppose that $M$ is
not a K-matrix. We will construct a vector $q$ such that the induced
USO violates~\eqref{eq:up-uni}. First, since $M$ is not a K-matrix,
there exists an off-diagonal entry $m_{ij}>0$, $i\neq j$. W.l.o.g.\
assume that $\{i,j\}=\{1,2\}$ and define
\[Q = \left(\begin{array}{rr}
m_{11} & m_{12} \\
m_{21} & m_{22}
\end{array}\right).
\]

Let us now consider $B=\{1,2\}$. Then 
\[A_B = \left(\begin{array}{c|c}
-Q & 0 \\ \hline
0 & I_{n-2} 
\end{array}\right),\]
and
\[A_B^{-1} = 
\left(\begin{array}{c|c}
-Q^{-1} & 0 \\ \hline
0 & I_{n-2} 
\end{array}\right),
\]
where
\[-Q^{-1} = \frac{1}{\det(Q)}\left(\begin{array}{rr}
  -m_{22} & m_{12} \\
  m_{21} & -m_{11}
  \end{array}\right) = \frac{1}{m_{11}m_{22}-m_{21}m_{12}}\left(\begin{array}{rr}
  -m_{22} & m_{12} \\
  m_{21} & -m_{11}
  \end{array}\right).
\]
Since $Q$ is a P-matrix, its determinant is positive, hence $-Q^{-1}$ has
some positive off-diagonal entry. Suppose first that $m_{12}>0$. Then we set
$q=(-m_{12}, -(m_{22}+1),0,\ldots,0)$ and observe that
\[(A_{B}^{-1}q)_1 = \frac{-m_{12}}{m_{11}m_{22}-m_{21}m_{12}} < 0.\]
Slightly perturbing $q$ such that it becomes nondegenerate will not change
this strict inequality. But this is a contradiction to~\eqref{eq:up-uni}:
at $B=\emptyset$, the edges in directions $1$ and~$2$ are outgoing due to
$q_1,q_2<0$ (note that $m_{22}>0$ because $M$~is a P-matrix), but
at $B=\{1,2\}$, the edge in direction $1$ is \emph{not}
incoming as required by~\eqref{eq:up-uni}. If $m_{21}>0$, the vector
$q=(-(m_{11}+1),-m_{21},0,\ldots,0)$ leads to the same contradiction.
\end{proof}

\begin{remark}
It may be more interesting to answer the following open question: Is it
true that every locally uniform P-USO is a K-USO?
\end{remark}

\section*{Acknowledgments}

The third author is supported by the project
	`A Fresh Look at the Complexity of Pivoting in Linear Complementarity'
	no.~200021-124752$\,$/$\,$1 of the Swiss National Science Foundation.


\begin{thebibliography}{10}

\bibitem{Cha:A-special}
R.~Chandrasekaran.
\newblock A special case of the complementary pivot problem.
\newblock {\em Opsearch}, 7:263--268, 1970.

\bibitem{Chung:Hardness}
S.-J. Chung.
\newblock {NP}-completeness of the linear complementarity problem.
\newblock {\em J. Optim. Theory Appl.}, 60(3):393--399, 1989.

\bibitem{CotPanSto:LCP}
R.~W. Cottle, J.-S. Pang, and R.~E. Stone.
\newblock {\em The Linear Complementarity Problem}.
\newblock Computer science and scientific computing. Academic Press, 1992.

\bibitem{Ded:Ueber-Zerlegungen}
R.~Dedekind.
\newblock Ueber {Z}erlegungen von {Z}ahlen durch ihre gr{\"o}ssten gemeinsamen
  {T}eiler.
\newblock {\em Festschrift Hoch. Braunschweig u. ges. Werke, II}, pages
  103--148, 1897.

\bibitem{Dev:LP-orientations}
M.~Develin.
\newblock {LP}-orientations of cubes and crosspolytopes.
\newblock {\em Adv. Geom.}, 4(4):459--468, 2004.

\bibitem{e-acg-87}
H.~Edelsbrunner.
\newblock {\em Algorithms in Combinatorial Geometry}, volume~10 of {\em EATCS
  Monographs on Theoretical Computer Science}.
\newblock Springer-Verlag, Heidelberg, West Germany, 1987.

\bibitem{FisGar:The-smallest}
K.~Fischer and B.~G{\"a}rtner.
\newblock The smallest enclosing ball of balls: combinatorial structure and
  algorithms.
\newblock {\em Internat. J. Comput. Geom. Appl.}, 14(4--5):341--378, 2004.

\bibitem{FonFukGar:Pivoting}
J.~Foniok, K.~Fukuda, B.~G{\"a}rtner, and H.-J. L{\"u}thi.
\newblock Pivoting in linear complementarity: Two polynomial-time cases.
\newblock {\em Discrete Comput. Geom.}, 42(2):187--205, 2009.

\bibitem{Gar:RandAlgs}
B.~G{\"a}rtner.
\newblock Randomized algorithms -- {A}n introduction through unique sink
  orientations.
\newblock Lecture {N}otes, ETH Zurich, 2004.

\bibitem{GarMorRus:Unique}
B.~G{\"a}rtner, W.~D. Morris, Jr., and L.~R{\"u}st.
\newblock Unique sink orientations of grids.
\newblock {\em Algorithmica}, 51(2):200--235, 2008.

\bibitem{GarSch:Linear}
B.~G{\"a}rtner and I.~Schur.
\newblock Linear programming and unique sink orientations.
\newblock In {\em Proceedings of the 17th annual ACM-SIAM symposium on Discrete
  algorithms (SODA'06)}, pages 749--757, Miami, Florida, 2006.

\bibitem{GarSpr:Hessenberg}
B.~G{\"a}rtner and M.~Sprecher.
\newblock A polynomial-time algorithm for the tridiagonal and {H}essenberg
  {P}-matrix linear complementarity problem.
\newblock {\em Operations Research Letters}, 40(6):484--486, 2012.

\bibitem{Kla:On-Classes}
L.~Klaus.
\newblock On classes of unique-sink orientations arising from pivoting in
  linear complementarity.
\newblock Master's thesis, ETH Zurich, 2008.

\bibitem{Kla:Fresh}
L.~Klaus.
\newblock {\em A Fresh Look at the Complexity of Pivoting in Linear
  Complementarity}.
\newblock PhD thesis, ETH, Z{\"u}rich, 2012.

\bibitem{Kle:On-Dedekinds-Problem:}
D.~Kleitman.
\newblock On {D}edekind's problem: The number of monotone {B}oolean functions.
\newblock {\em Proc. Amer. Math. Soc.}, 21(3):677--682, 1969.

\bibitem{Man:Linear}
O.~L. Mangasarian.
\newblock Linear complementarity problems solvable by a single linear program.
\newblock {\em Math. Programming}, 10(2):263--270, 1976.

\bibitem{Mat:The-Number}
J.~Matou{\v s}ek.
\newblock The number of unique-sink orientations of the hypercube.
\newblock {\em Combinatorica}, 26(1):91--99, 2006.

\bibitem{Meg:A-Note-on-the-Complexity}
N.~Megiddo.
\newblock A note on the complexity of {P}-matrix {LCP} and computing an
  equilibrium.
\newblock RJ 6439, IBM Research, Almaden Research Center, 650 Harry Road, San
  Jose, California, 1988.

\bibitem{hiro}
H.~Miyazawa.
\newblock Cube orientations and properties.
\newblock Project Report, ETH Zurich, 2001.

\bibitem{Mor:Distinguishing}
W.~D. Morris, Jr.
\newblock Distinguishing cube orientations arising from linear programs.
\newblock Manuscript, 2002.

\bibitem{Mor:Randomized-pivot}
W.~D. Morris, Jr.
\newblock Randomized pivot algorithms for {P}-matrix linear complementarity
  problems.
\newblock {\em Math. Program., Ser. A}, 92(2):285--296, 2002.

\bibitem{MorLaw:Geometric}
W.~D. Morris, Jr. and J.~Lawrence.
\newblock Geometric properties of hidden {M}inkowski matrices.
\newblock {\em SIAM J. Matrix Anal. Appl.}, 10(2):229--232, 1989.

\bibitem{Mur:Linear}
K.~G. Murty.
\newblock {\em Linear Complementarity, Linear and Nonlinear Programming},
  volume~3 of {\em Sigma Series in Applied Mathematics}.
\newblock Heldermann, Berlin, 1988.

\bibitem{PanCha:Linear}
J.-S. Pang and R.~Chandrasekaran.
\newblock Linear complementarity problems solvable by a polynomially bounded
  pivoting algorithm.
\newblock {\em Math. Programming Stud.}, 25:13--27, 1985.

\bibitem{Sai:A-note}
R.~Saigal.
\newblock A note on a special linear complementarity problem.
\newblock {\em Opsearch}, 7:175--183, 1970.

\bibitem{STW}
H.~Samelson, R.~M. Thrall, and O.~Wesler.
\newblock A partition theorem for {E}uclidean {$n$}-space.
\newblock {\em Proc. Amer. Math. Soc.}, 9(5):805--807, 1958.

\bibitem{SchSza:Finding}
I.~Schurr and T.~Szab{\'o}.
\newblock Finding the sink takes some time: An almost quadratic lower bound for
  finding the sink of unique sink oriented cubes.
\newblock {\em Discrete Comput. Geom.}, 31(4):627--642, 2004.

\bibitem{Sch:Unique}
I.~A. Schurr.
\newblock {\em Unique Sink Orientations of Cubes}.
\newblock PhD thesis, ETH, Z{\"u}rich, 2004.

\bibitem{StiWat:Digraph-models}
A.~Stickney and L.~Watson.
\newblock Digraph models of {B}ard-type algorithms for the linear
  complementarity problem.
\newblock {\em Math. Oper. Res.}, 3(4):322--333, 1978.

\bibitem{SzaWel:UniSink}
T.~Szab{\'o} and E.~Welzl.
\newblock Unique sink orientations of cubes.
\newblock In {\em Proceedings of the 42nd IEEE Symposium on Foundations of
  Computer Science (FOCS'01)}, pages 547--555, 2001.

\bibitem{Warren}
H.~E. Warren.
\newblock Lower bounds for approximation by nonlinear manifolds.
\newblock {\em Trans. Amer. Math. Soc.}, 133:167--178, 1968.

\end{thebibliography}
\end{document}